\par\addvspace{\@bls \@plus 0.5\@bls \@minus 0.1\@bls}\noindent
\par\addvspace{\@bls \@plus 0.5\@bls \@minus 0.1\@bls}}
\renewcommand{\ALG@name}{Algorithme}
  \newcommand{\IFTHEN}[3][default]{\ALC@it\algorithmicif\ #2\
    \algorithmicthen\ #3\
    \ifthenelse{\boolean{ALC@noend}}{}{\algorithmicendif\ } \ALC@com{#1}}
  \newcommand{\IFTHENNOEND}[3][default]{\ALC@it\algorithmicif\ #2\
    \algorithmicthen\ #3\
    \ALC@com{#1}}
  \newcommand{\ONLYELSE}[1][default]{\ALC@it\algorithmicelse%
\ALC@com{#1}\begin{ALC@if}}
\newcommand{\IFTHENELSE}[3]{\ALC@it\algorithmicif\ #1\
  \algorithmicthen\ #2\
  \algorithmicelse\ #3\
  \ifthenelse{\boolean{ALC@noend}}{}{\algorithmicendif\ } }
\newcommand{\IFTHENEND}[2]{\ALC@it\algorithmicif\ #1\
  \algorithmicthen\ #2\ \algorithmicendif}
\DeclareMathOperator{\rank}{rank}
\newtheorem{theorem}{Theorem}
\newtheorem{remark}{Remark}
\newtheorem{definition}{Definition}
\newtheorem{lemma}{Lemma}
\newtheorem{problem}{Problem}
\newcommand{\F}{\ensuremath{\mathbb{F}}}
\newcommand{\Z}{\ensuremath{\mathbb{Z}}}
\newcommand{\RPM}[1]{\ensuremath{\mathcal{R}_{#1}}\xspace}
\newcommand{\TRMM}{\texttt{TRMM}\xspace}
\newcommand{\TRSM}{\texttt{TRSM}\xspace}
\newcommand{\PERM}{\texttt{PERM}\xspace}
\newcommand{\GEMM}{\texttt{GEMM}\xspace}
\newcommand{\SCAL}{\texttt{SCAL}\xspace}
\newcommand{\DADD}{\texttt{DADD}\xspace}
\newcommand{\PLDUQ}{\texttt{PLDUQ}\xspace}
\newcommand{\SYRDK}{\texttt{SYRDK}\xspace}
\newcommand{\SYRDtwoK}{\texttt{SYRD2K}\xspace}
\newcommand{\TRSS}{\texttt{TRSSYR2K}\xspace}
\DeclareRobustCommand\vdots{%
  \mathpalette\@vdots{}%
}
\newcommand*{\@vdots}[2]{%
  \sbox0{$#1\cdotp\cdotp\cdotp\m@th$}%
  \sbox2{$#1.\m@th$}%
  \vbox{%
    \dimen@=\wd0 %
    \advance\dimen@ -3\ht2 %
    \kern.5\dimen@
    \dimen@=\wd2 %
    \advance\dimen@ -\ht2 %
    \dimen2=\wd0 %
    \advance\dimen2 -\dimen@
    \vbox to \dimen2{%
      \offinterlineskip
      \copy2 \vfill\copy2 \vfill\copy2 %
    }%
  }%
}
\DeclareRobustCommand\ddots{%
  \mathinner{%
    \mathpalette\@ddots{}%
    \mkern\thinmuskip
  }%
}
\newcommand*{\@ddots}[2]{%
  \sbox0{$#1\cdotp\cdotp\cdotp\m@th$}%
  \sbox2{$#1.\m@th$}%
  \vbox{%
    \dimen@=\wd0 %
    \advance\dimen@ -3\ht2 %
    \kern.5\dimen@
    \dimen@=\wd2 %
    \advance\dimen@ -\ht2 %
    \dimen2=\wd0 %
    \advance\dimen2 -\dimen@
    \vbox to \dimen2{%
      \offinterlineskip
      \hbox{$#1\mathpunct{.}\m@th$}%
      \vfill
      \hbox{$#1\mathpunct{\kern\wd2}\mathpunct{.}\m@th$}%
      \vfill
      \hbox{$#1\mathpunct{\kern\wd2}\mathpunct{\kern\wd2}\mathpunct{.}\m@th$}%
    }%
  }%
}
\newenvironment{smatrix}{\left[\begin{smallmatrix}}{\end{smallmatrix}\right]}
\newcommand{\mat}[1]{\ensuremath{\mathbf{#1}}}
\renewcommand*\env@matrix[1][*\c@MaxMatrixCols c]{%
  \hskip -\arraycolsep
  \let\@ifnextchar\new@ifnextchar
  \array{#1}}
\newcommand{\OpenDreamKit}{the \href{http://opendreamkit.org}{OpenDreamKit} \href{https://ec.europa.eu/programmes/horizon2020/}{Horizon 2020} \href{https://ec.europa.eu/programmes/horizon2020/en/h2020-section/european-research-infrastructures-including-e-infrastructures}{European Research Infrastructures} project (\#\href{http://cordis.europa.eu/project/rcn/198334_en.html}{676541})}
\title{Symmetric indefinite triangular factorization revealing the rank profile matrix\footnote{This work is partly funded by \OpenDreamKit.}}
\author{Jean-Guillaume Dumas\footnote{
  {Universit\'e Grenoble Alpes}.
  {Laboratoire Jean Kuntzmann, CNRS, UMR 5224}.
  {700 avenue centrale, IMAG - CS 40700},
  {38058 Grenoble, cedex 9}
  {France}.
\href{mailto:Jean-Guillaume.Dumas@univ-grenoble-alpes.fr,Clement.Pernet@univ-grenoble-alpes.fr}{\{firstname.lastname\}@univ-grenoble-alpes.fr}}
\and Cl\'ement Pernet\footnotemark[2]}
\begin{document}

\maketitle

\begin{abstract}
We present a novel recursive algorithm for reducing a symmetric matrix to a
triangular factorization which reveals the rank profile matrix.  
That is, the algorithm computes a factorization  $\mat{P}^T\mat{A}\mat{P} = \mat{L}\mat{D}\mat{L}^T$ where $\mat{P}$ is a
permutation matrix, $\mat{L}$ is lower triangular with a unit diagonal and  $\mat{D}$ is
symmetric block diagonal with $1{\times}1$ and $2{\times}2$
antidiagonal blocks.
The novel algorithm requires $O(n^2r^{\omega-2})$ arithmetic operations.
Furthermore, experimental results demonstrate that our algorithm can even
be slightly more than twice as fast as the state of the art unsymmetric Gaussian
elimination  in most cases, that is it achieves
approximately the same computational speed. 
By adapting the pivoting strategy developed in the unsymmetric case, we show
how to recover the rank profile matrix from the permutation matrix and the
support of the block-diagonal matrix.
There is an obstruction in characteristic $2$ for revealing the rank profile
matrix which requires to relax the shape of the block diagonal by allowing the
2-dimensional blocks to have a non-zero bottom-right coefficient.
This relaxed decomposition can then be transformed into a standard 
$\mat{P}\mat{L}\mat{D}\mat{L}^T\mat{P}^T$
decomposition at a negligible cost.
\end{abstract}

\section{Introduction}

Computing a triangular factorization of a symmetric matrix is a commonly used
kernel to solve symmetric linear systems, or to compute the signature of
symmetric bilinear forms.
Besides the fact that it is expected to save half of the arithmetic cost of a standard
(non-symmetric) Gaussian elimination, it can also recover invariants, such as
the signature, specific to symmetric matrices, and thus, e.g., be used to
certify positive or negative definite or
semidefiniteness~\cite[Corollary~1]{Kaltofen:2011:quadcert}.

It is a fundamental computation in numerical linear algebra, and is therefore most
often presented in the setting of real matrices. When the matrix is positive
definite, the Cholesky factorization can be defined: $\mat{A}=\mat{L}\mat{L}^T$, where $\mat{L}$ is
lower triangular for which square roots of diagonal elements have to be
extracted. Alternatively, gathering the diagonal elements in a central diagonal
matrix yields the LDLT factorization $\mat{A}=\mat{L}\mat{D}\mat{L}^T$ which no longer requires square
roots. Similarly as for the LU decomposition, it is only defined for matrices
with generic rank profile, i.e. having their $r=\text{rank}(\mat{A})$ first
leading principal minors non-zero. For arbitrary matrices, symmetric
permutations may lead to the former situations:
$\mat{P}\mat{A}\mat{P}^T=\mat{L}\mat{D}\mat{L}^T$. However, this is
unfortunately not always the case. For instance there is no permutation
$\mat{P}$ such that $\begin{smatrix} 0&1\\1&0\end{smatrix}$ has a LDLT
factorization with a diagonal $\mat{D}$.
This lead to a series of generalizations where the matrix $\mat{D}$ was replaced first
by a tridiagonal symmetric matrix by Parlett and Reid~\cite{Parlett:1970:SSL},
improved by Aasen~\cite{Aasen:1971:tridiag}, achieving half the arithmetic cost
of Gaussian elimination. Bunch and Kaufman then replaced this tridiagonal matrix
by a block diagonal composed of~1 or 2-dimensional diagonal blocks.

\paragraph{Pivoting}
In numerical linear algebra, the choice of the permutation matrix is mainly
driven by the need to ensure a good numerical quality of the decomposition.
Bunch and Parlett~\cite{Bunch:1971:DMS} use a full pivoting technique,
requiring a cubic number of tests. Bunch and Kaufman pivoting strategy,
implemented in LAPACK, uses a partial pivoting requiring only a quadratic number
of tests.

In the context of exact linear algebra, for instance when computing over a finite
field, numerical stability is no longer an issue. However, the computation of
echelon forms and rank profiles, central in many applications, impose further
constraints on the pivoting. A characterization of the requirements for the
pivoting strategy is given in~\cite{DPS15,jgd:2017:bruhat} so that a PLUQ
decomposition can reveal these rank profiles and echelon forms in the
non-symmetric case.
In particular, it is shown that pivot selection minimizing the lexicographic order
on the coordinate of the pivot, combined with row and column rotations to move
the pivot to the diagonal, enable the computation of the rank profile matrix,
an invariant from which all rank profile information, the row and the column
echelon form can be recovered.

\paragraph{Recursive algorithms}
As in numerical linear algebra, we try to gather arithmetic operations in level
3 BLAS operations (matrix multiplication based), for it delivers the best
computation throughput.
Numerical software often use tiled implementations, especially when the pivoting is more
constrained by the symmetry~\cite{Shklarski:2007:blockrec,Elmroth:2004:rec},
or in order to define communication avoiding variants~\cite{BBDDDPSTY14}.
In exact linear algebra sub-cubic matrix multiplication, such as Strassen's
algorithm, can be extensively used with no numerical instability issues. This
led to the design of recursive algorithms, which was proven successful
in the unsymmetric case, including for shared memory parallel computations~\cite{DGPRZ16}. 





\paragraph{Contribution}
The contribution here is to propose a recursive algorithm producing a symmetric
factorization PLDLTPT  over any field,
from which the rank profile matrix of the input can be recovered. This algorithm
is a recursive variant of Bunch and Kaufman's algorithm~\cite{Bunch:1977:Kaufman} where the pivoting
strategy has been replaced by the one developped previously by the authors in
the unsymmetric case~\cite{jgd:2017:bruhat}. Compared to the 
recursive adaptation of Aasen's algorihtm in~\cite{Rozloznik:2011:PTT}, our
algorithm leads to a similar data partitionning but does not suffer from an
arithmetic overhead compared to Aasen's algorithm.
Our algorithms has time complexity $O(n^2r^{\omega-2})$ where $\omega$ is an
admissible exponent for matrix multiplication and $r$ is the rank of the input
matrix.
With $\omega=3$, the leading constant in the time complexity is $1/3$, matching
that of the best alternative algorithms based on cubic time linear algebra.


In Section~\ref{sec:rpm} we show that in characteristic two the rank profile
matrix can not always be revealed by a symmetric factorization with antidiagonal
blocks: sometimes antitriangular blocks are also required.  
Then we recall in Section~\ref{sec:tools} the main required level 3 linear algebra
subroutines. In Section~\ref{sec:algo} we present the main recursive
algorithm. An alternative iterative Crout variant is 
presented in Section~\ref{sec:basecase} to be used as a base case in the recursion.
We finally show, in Section~\ref{sec:expe}, experiments of the resulting
implementation over a finite field. They demonstrate the efficiency of cascading
the recursive algorithm with the base case variant, especially with matrices involving a
lot of pivoting. They finally confirm a speed-up by a factor of about 2 compared to the state of the art
unsymmetric Gaussian elimination.

\section{The symmetric rank profile matrix}\label{sec:rpm}
\subsection{The pivoting matrix}
Theorem~\ref{th:rpm} recalls the definition of the rank profile matrix.
\begin{theorem}[\cite{DPS15}]\label{th:rpm}
   Let $\mat{A}\in\F^{m\times n}$. There exists a unique $m\times n$
 $\{0,1\}$-matrix $\RPM{A}$ with  $r$ 1's in rook placement
 of which every leading sub-matrix has the same
 rank as the corresponding
 leading sub-matrix of $\mat{A}$.
 This matrix is called the \em{rank profile matrix} of~$\mat{A}$.
\end{theorem}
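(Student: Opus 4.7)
The plan is to derive both existence and uniqueness from a single explicit formula for the entries of $\RPM{A}$ in terms of the rank function
\[
\rho(i,j) \;=\; \rank\bigl(\mat{A}_{[1..i],[1..j]}\bigr),
\]
with the convention $\rho(0,j)=\rho(i,0)=0$. The starting observation is that any $\{0,1\}$-matrix $R$ whose ones form a rook placement has the property that the rank of each of its leading submatrices equals the number of ones it contains, because the nonzero rows and columns of such a submatrix are distinct standard basis vectors. Consequently, if $R$ satisfies the theorem's rank-matching property, the number of ones in the leading $i\times j$ block of $R$ must equal $\rho(i,j)$, and a discrete inclusion-exclusion then forces
\begin{equation*}
R_{i,j} \;=\; \rho(i,j)-\rho(i{-}1,j)-\rho(i,j{-}1)+\rho(i{-}1,j{-}1).
\end{equation*}
Uniqueness is immediate from this identity.

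For existence, I would take this identity as the \emph{definition} of $\RPM{A}$ and verify three properties: each entry lies in $\{0,1\}$, the ones form a rook placement, and there are exactly $r$ of them. The count of ones follows from a telescoping sum that collapses to $\rho(m,n)=r$.

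Showing that each entry lies in $\{0,1\}$ is the algebraic core of the argument and is what I expect to be the main obstacle. The upper bound by $1$ is clear from monotonicity of $\rho$: appending one row or one column changes the rank by at most one. The real work is excluding the value $-1$, i.e.\ proving that whenever adjoining column $j$ raises the rank of the leading $(i{-}1)\times(j{-}1)$ block, the same column also raises the rank of the leading $i\times(j{-}1)$ block. This is a submodularity-type property of $\rho$ and follows by a direct linear-dependence argument: any dependence expressing the $j$-th column of $\mat{A}_{[1..i],[1..j]}$ in terms of its first $j-1$ columns restricts, by truncating to the first $i-1$ coordinates, to an analogous dependence over the smaller row set; contrapositively, independence propagates from fewer rows to more.

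The remaining verification is rook placement. Suppose $R_{i,j_0}=1$; unpacking the defining formula, since each of the two row-increments of $\rho$ lies in $\{0,1\}$, the value $1$ is only possible if $\rho(i,j_0)-\rho(i{-}1,j_0)=1$ while $\rho(i,j_0{-}1)-\rho(i{-}1,j_0{-}1)=0$, meaning row $i$ becomes linearly independent of rows $1,\dots,i-1$ precisely when column $j_0$ is adjoined. Because the independent block on columns $[1..j_0]$ persists when further columns are appended (any dependence of row $i$ on the previous rows over $[1..j]$ would restrict to one over $[1..j_0]$), one obtains $\rho(i,j)-\rho(i{-}1,j)=1$ for every $j\geq j_0$. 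A second one at $(i,j_1)$ with $j_1>j_0$ would require $\rho(i,j_1{-}1)-\rho(i{-}1,j_1{-}1)=0$, a contradiction. By the symmetric argument with rows and columns swapped, no two ones share a column, finishing the proof.
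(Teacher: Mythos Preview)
The paper does not actually prove this theorem: it is quoted from~\cite{DPS15} as background, so there is no in-paper argument to compare against. Your proof via the second-difference formula
\[
R_{i,j}=\rho(i,j)-\rho(i{-}1,j)-\rho(i,j{-}1)+\rho(i{-}1,j{-}1)
\]
is the standard one and is correct. The submodularity step (ruling out $-1$) and the rook-placement argument are both sound as written.

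One small omission in the existence part: you only invoke the telescoping sum to count the total number of ones as $r=\rho(m,n)$, but you also need it to close the loop and show that the constructed $R$ actually has the rank-matching property. The same telescoping over $[1..i]\times[1..j]$ shows that the leading $i\times j$ block of $R$ contains exactly $\rho(i,j)$ ones; combined with your opening observation that a rook-placement $\{0,1\}$-matrix has rank equal to its number of ones on every leading block, this gives $\rank(R_{[1..i],[1..j]})=\rho(i,j)$ for all $i,j$. With that sentence added, the argument is complete.
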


\begin{lemma}
A symmetric matrix has a symmetric rank profile matrix.
\end{lemma}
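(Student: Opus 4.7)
The plan is to exploit the uniqueness clause of Theorem~\ref{th:rpm}: to prove $\RPM{A}$ is symmetric, I will show that $\RPM{A}^T$ satisfies the defining properties of the rank profile matrix of $\mat{A}$, and conclude by uniqueness that $\RPM{A}^T = \RPM{A}$.

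First I would verify that $\RPM{A}^T$ is still a $\{0,1\}$-matrix whose 1's form a rook placement: this is immediate since transposition permutes the coordinates of the 1's via $(i,j) \mapsto (j,i)$, which preserves the property that no two of them share a row or column, and preserves the number $r$. Next I need the rank-matching property on \emph{every} leading submatrix. Writing $\minor{M}{i}{j}$ for the leading $i\times j$ submatrix of a matrix $M$, observe that $\minor{\RPM{A}^T}{i}{j} = \bigl(\minor{\RPM{A}}{j}{i}\bigr)^T$, so it has the same rank as $\minor{\RPM{A}}{j}{i}$. By the defining property of $\RPM{A}$, this rank equals $\rank\minor{\mat{A}}{j}{i}$, and by symmetry of $\mat{A}$ this in turn equals $\rank\minor{\mat{A}}{i}{j}$. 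Hence $\RPM{A}^T$ satisfies the rank-matching condition of Theorem~\ref{th:rpm} for $\mat{A}$.

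By the uniqueness asserted in Theorem~\ref{th:rpm}, we deduce $\RPM{A}^T = \RPM{A}$, which is the desired symmetry. I do not foresee a real obstacle here: the only two small points to check carefully are that transposition preserves rook placement, and that the indexing of leading submatrices behaves correctly under transposition, both of which follow from the definitions.
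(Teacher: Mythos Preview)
Your proof is correct and follows essentially the same approach as the paper's: both rely on the uniqueness in Theorem~\ref{th:rpm} together with the observation that, for symmetric $\mat{A}$, the $(i,j)$ and $(j,i)$ leading submatrices of $\mat{A}$ are transposes of one another and hence have equal rank. The paper phrases this tersely as a contradiction (``otherwise some leading submatrix of $\mat{A}$ and of $\mat{A}^T$ would have different rank''), whereas you spell out directly that $\RPM{A}^T$ satisfies the defining property and invoke uniqueness; the underlying argument is the same.
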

\begin{proof}
  Otherwise, the rank of some leading submatrix of $\mat{A}$ and the same
  leading submatrix of $\mat{A}^T$ would be different which is absurd.
\end{proof}

Also, any symmetric matrix has a triangular decomposition
$\mat{A}=\mat{P}\mat{L}\mat{D} \mat{L}^T \mat{P}^T$ where $\mat{L}$ is unit lower triangular, $\mat{D}$ is block diagonal,
formed by 1-dimensional scalar blocks or 2-dimensional blocks of the form
$\begin{smatrix}  0&x\\x&0\end{smatrix}$ and $\mat{P}$ a permutation matrix.

We here further define ${\Psi}$ as the support matrix of $\mat{D}$: namely, a
block 
diagonal $\{0,1\}$-matrix such that $\mat{D}={\Psi} \overline{\mat{D}}$, with $\overline{\mat{D}}$ a
diagonal matrix. 
  \begin{definition}
    The pivoting matrix of a PLDLTPT decomposition is the matrix $\Pi=\mat{P}{\Psi} \mat{P}^T$.
  \end{definition}

  \begin{definition}
    A PLDLTPT reveals the rank profile matrix of a symmetric matrix $\mat{A}$ if its
    pivoting matrix equals the rank profile matrix of $\mat{A}$.
  \end{definition}

\subsection{Antitriangular blocks in characteristic two}
In zero or odd characteristic, we show next that one can always find such a
PLDLTPT decomposition revealing the rank profile matrix. In  characteristic two,
however, this is not always possible.
  \begin{lemma}\label{lem:cex}
    In characteristic 2, there is no symmetric indefinite elimination revealing the rank
  profile matrix of $\mat{A}=\begin{bmatrix}0&1\\1&1\end{bmatrix}$.
  \end{lemma}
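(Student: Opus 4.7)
My plan is to identify the rank profile matrix of $\mat{A}$ explicitly, enumerate the possible shapes for $\Psi$ compatible with that rank profile matrix, and then derive a contradiction by explicit computation in characteristic~$2$.

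First, I would determine $\RPM{A}$ directly from Theorem~\ref{th:rpm}. The matrix $\mat{A}$ has rank $2$, its leading $1\times 1$ submatrix $[0]$ has rank $0$, and its leading $1\times 2$ and $2\times 1$ submatrices both have rank $1$. The unique $\{0,1\}$-matrix in rook placement satisfying these leading-rank constraints is
\[
\RPM{A}=\begin{bmatrix}0&1\\1&0\end{bmatrix}.
\]

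Next, I would enumerate the possible block-supports $\Psi$. Since $\mat{A}$ is nonsingular, $\mat{D}$ must have rank~$2$. If $\mat{D}$ consists of two non-zero $1\times 1$ blocks, then $\Psi=\mat{I}$ and so $\Pi=\mat{P}\mat{I}\mat{P}^T=\mat{I}$, which does not equal $\RPM{A}$. Thus $\mat{D}$ must be a single $2\times 2$ antidiagonal block $\begin{smatrix}0&x\\x&0\end{smatrix}$ with $x\neq 0$, giving $\Psi=\begin{smatrix}0&1\\1&0\end{smatrix}$. Note that $\Psi$ is then invariant under conjugation by any $2\times 2$ permutation $\mat{P}$, so $\Pi=\RPM{A}$ is automatic.

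Finally, writing $\mat{L}=\begin{smatrix}1&0\\\ell&1\end{smatrix}$, a direct computation yields
\[
\mat{L}\mat{D}\mat{L}^T=\begin{bmatrix}0&x\\x&2\ell x\end{bmatrix}.
\]
In characteristic $2$ the $(2,2)$ entry vanishes, so $\mat{L}\mat{D}\mat{L}^T$ has zero diagonal. Symmetric conjugation by a permutation matrix preserves the zero-diagonal property, hence every candidate $\mat{P}\mat{L}\mat{D}\mat{L}^T\mat{P}^T$ has zero diagonal, contradicting $\mat{A}_{2,2}=1$.

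The only potential obstacle is making sure no alternative structure for $\Psi$ has been overlooked: the enumeration is short because the rank and the prescribed antidiagonal shape of the $2\times 2$ blocks leave essentially two cases, one of which is immediately ruled out by its diagonal pivoting matrix; the remaining case is killed by the single identity $2\ell x=0$ in characteristic~$2$. This argument also pinpoints why relaxing the $2\times 2$ blocks to allow a nonzero bottom-right entry (as announced in the abstract) suffices to recover the rank profile matrix.
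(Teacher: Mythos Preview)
Your proof is correct and follows essentially the same approach as the paper: a direct case analysis on the shape of $\mat{D}$ for this $2\times 2$ matrix, with the crucial obstruction being the identity $2\ell x=0$ in characteristic~$2$. The only organizational difference is that the paper enumerates all four pairs $(\mat{P},\Psi)\in\{\mat{I},\mat{J}\}^2$ separately, whereas you group the cases by $\Psi$ and dispatch both choices of $\mat{P}$ at once (via $\mat{P}\mat{I}\mat{P}^T=\mat{I}$ in the first case and the preservation of a zero diagonal under permutation conjugation in the second), which is slightly cleaner.
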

\begin{proof}
Let $\mat{J}$ be the $2{\times}2$ anti-diagonal identity matrix. This is also the rank
profile matrix of $\mat{A}$.
Now, we let $\mat{L}=\begin{bmatrix}1&0\\x&1\end{bmatrix}$,
$\overline{\mat{D}}=\begin{bmatrix}y&0\\0&z\end{bmatrix}$. As the permutation matrices
involved, $P$ and ${\Psi}$, can only be either the identity matrix or $\mat{J}$, there
are then four cases: 
\begin{enumerate}
\item $\mat{A}=\mat{L}{\cdot}\overline{\mat{D}}{\cdot}\mat{L}^T=\begin{bmatrix}y&x y\\x y&x^2 y+z\end{bmatrix}$,
  but $y=0$ and $\rank(\overline{\mat{D}})=\rank(A)=2$ are incompatible.
\item $\mat{A}=\mat{J}{\cdot}\mat{L}{\cdot}\overline{\mat{D}}{\cdot}\mat{L}^T{\cdot}\mat{J}^T=\begin{bmatrix}x^2y+z&x y\\x y&y\end{bmatrix}$, but $\mat{J}{\cdot}\mat{I}{\cdot}\mat{J} \neq \mat{J}=\RPM{\mat{A}}$.
\item $\mat{A}=\mat{L}{\cdot}\overline{\mat{D}}{\cdot}\mat{J}{\cdot}\mat{L}^T=\begin{bmatrix}0&y\\z&xy+xz\end{bmatrix}$,
  but we need $y=z$ for the symmetry and then $2xy=0\neq 1$ in
  characteristic $2$. 
\item
  $\mat{A}=\mat{J}{\cdot}\mat{L}{\cdot}\overline{\mat{D}}{\cdot}\mat{J}{\cdot}\mat{L}^T{\cdot}\mat{J}^T=\begin{bmatrix}xy+xz&z\\y&0\end{bmatrix}$ but the bottom right coefficient of $\mat{A}$ is non zero. 
\end{enumerate}%
\end{proof}

However, one can generalize the PLDLTPT decomposition to a block diagonal
matrix $\mat{D}$ having 2-dimensional blocks of the form
$ \begin{smatrix}   0&c\\c&d  \end{smatrix}$ (lower antitriangular).
Then the support matrix $\Psi$ of
$\mat{D}$ is the block diagonal $\{0,1\}$ matrix such that $\mat{D}=\Psi \overline{\mat{D}}$, with
$\overline{\mat{D}}$ an upper triangular bidiagonal matrix (or equivalently such that
$\mat{D}=\overline{\mat{D}}\Psi$, with $\overline{\mat{D}}$ lower triangular bidiagonal). 

With these generalized definitions, we show in Section~\ref{sec:algo}, that
there exists RPM-revealing PLDLTPT decompositions.\\

\subsection{Antitriangular decomposition}\label{ssec:antiti}
Then, such a generalized decomposition can always be further reduced to a strict
PLDLTPT decomposition by eliminating each of the antitriangular blocks.  
For this, the observation is that in
characteristic two, a symmetric lower antitriangular $2{\times}2$ block is
invariant under any symmetric triangular transformation:
$\begin{bmatrix}1&\\x&1\end{bmatrix}
\begin{bmatrix}&c\\c&d\end{bmatrix}
\begin{bmatrix}1&x\\&1\end{bmatrix}=
\begin{bmatrix}&c\\c&2cx+d\end{bmatrix}\equiv
\begin{bmatrix}&c\\c&d\end{bmatrix}\mod 2=
\begin{bmatrix}1&\\&1\end{bmatrix}
\begin{bmatrix}&c\\c&d\end{bmatrix}
\begin{bmatrix}1&\\&1\end{bmatrix}$.
Thus for each $2{\times}2$ block in a tridiagonal decomposition, the
corresponding $2{\times}2$ diagonal block in $\mat{L}$ can be replaced by $\mat{I}_2$, via a
multiplication by $\begin{smatrix}1&\\-x&1\end{smatrix}$.

Further, we have that: 
$\begin{smatrix}&c\\c&d\end{smatrix}=
\mat{J}
\begin{smatrix}1&\\c/d&1\end{smatrix}
\begin{smatrix}d&\\&-c^2/d\end{smatrix}
\begin{smatrix}1&c/d\\&1\end{smatrix}
\mat{J}$.
Now $\mat{J}$ commutes with the identity $\mat{I}_2$ matrix. 
Therefore we have that:
$\begin{smatrix}1&\\x&1\end{smatrix}
\begin{smatrix}1&\\-x&1\end{smatrix}
\mat{J}
\begin{smatrix}1&\\c/d&1\end{smatrix}
=
\mat{J}
\begin{smatrix}1&\\x&1\end{smatrix}
\begin{smatrix}1&\\-x&1\end{smatrix}
\begin{smatrix}1&\\c/d&1\end{smatrix}
$.

Thus, to eliminate the antitriangular blocks, 
create a triangular matrix $\mat{L}_J$ that starts as the identity
and where its $i,i+1$ blocks corresponding to a
$\begin{smatrix}1&\\x&1\end{smatrix}$ block in $\mat{L}$  is a
$\begin{smatrix}1&\\c/d-x&1\end{smatrix}$ block (associated to an
antitriangular $\begin{smatrix}&c\\c&d\end{smatrix}$ block, with $d{\neq}0$, in
$\mat{D}$).
Then replace the triangular
matrix $\mat{L}$ by $\tilde{\mat{L}}=\mat{L}{\cdot}\mat{L}_J$.
Also, modify the diagonal matrix $\mat{D}$, to $\tilde{\mat{D}}$ such that 
the $\begin{smatrix}&c\\c&d\end{smatrix}$ blocks of $\mat{D}$ are replaced by
$\begin{smatrix}d&\\&-c^2/d\end{smatrix}$ blocks in $\tilde{\mat{D}}$.
Finally, create a permutation matrix $\mat{P}_J$, starting from the identity matrix,
where each identity block at position $i,i+1$ corresponding to an antitriangular
block in $\mat{D}$ is replaced by $\mat{J}$. Then $\tilde{\mat{P}}=\mat{P}{\cdot}\mat{P}_J$.

From this we have now a symmetric PLDLTPT factorization,
$A=\tilde{\mat{P}}\tilde{\mat{L}}\tilde{\mat{D}}\tilde{\mat{L}}^T\tilde{\mat{P}}^T$, with purely $1{\times}1$
and $2{\times}2$ antidiagonal blocks in $\tilde{\mat{D}}$
(but then a direct access to the rank profile matrix, $\mat{P} \Psi \mat{P}^T$, might not be
possible from $\tilde{\mat{P}}$ and $\tilde{\mat{D}}$). 

In the following we present some building blocks and then algorithms
computing RPM-revealing symmetric indefinite triangular factorization.

  \section{Building blocks}\label{sec:tools}

  We recall here some of the standard algorithms from the BLAS3~\cite{DdCHD90} and
  LAPACK~\cite{LAPACK99} interfaces and generalization thereof~\cite{BBD12}, which will be used to
  define the main block  recursive symmetric eliminating algorithm.

  \begin{description}
    \item[\GEMM$(\mat{C},\mat{A},\mat{B})$:] general matrix multiplication. Computes $\mat{C}\leftarrow \mat{C} - \mat{A}\mat{B}$. 
    \item[\TRMM$(\mat{U},\mat{B})$:]   multiply a triangular and a rectangular matrix in-place. Computes $\mat{B}\leftarrow \mat{U}\mat{B}$ where $\mat{B}$ is $m\times n$ and $\mat{U}$
      is upper or lower triangular.
    \item[\TRMM$(\mat{C},\mat{U},\mat{B})$:]  multiply a triangular and a rectangular
      matrix. Computes $\mat{C}\leftarrow \mat{C}- \mat{U}\mat{B}$ where $\mat{B}$ and $\mat{C}$ are $m\times n$ and $\mat{U}$
      is upper or lower triangular. This is an adaptation of the BLAS3 \TRMM to
      leave the $\mat{B}$ operand unchanged.
    \item[\TRSM$(\mat{U},\mat{B})$:]  solve a triangular system with matrix right
      hand-side. Computes $\mat{B}\leftarrow \mat{U}^{-1}\mat{B}$ where $\mat{B}$ is $m\times n$ and $\mat{U}$
      is upper or lower triangular.
    \item[\SYRDK$(\mat{C},\mat{A},\mat{D})$:] symmetric rank $k$ update with diagonal
      scaling. Computes the upper or lower triangular part of the symmetric
      matrix $\mat{C}\leftarrow \mat{C} - \mat{A}\mat{D}\mat{A}^T$ where $\mat{A}$ is $n\times k$ and $D$ is
      diagonal or block diagonal.
    \item[\SYRDtwoK$(\mat{C},\mat{A},\mat{D},\mat{B})$:] symmetric rank $2k$ update with diagonal
      scaling. Computes the upper or lower triangular part of the symmetric
      matrix $\mat{C}\leftarrow \mat{C} - \mat{A}\mat{D}\mat{B}^T-\mat{B}\mat{D}\mat{A}^T$ where $\mat{A}$ and $\mat{B}$ are $n\times k$, $\mat{B}$ and $\mat{D}$ is
      diagonal or block diagonal. 
  \end{description}

  In addition, we need to introduce the \TRSS routine solving  Problem~\ref{pb:trss}.
  \begin{problem}
    \label{pb:trss}
    Let $\F$ be a field of characteristic different than 2. Given a symmetric
    matrix $\mat{C}\in \F^{n\times n}$ and a unit upper triangular matrix
    $\mat{U}\in\F^{n\times n}$, find an upper triangular matrix $X\in \F^{n\times n}$
    such that $\mat{X}^T\mat{U}+\mat{U}^T\mat{X}=\mat{C}$.
  \end{problem}
In characteristic 2, the diagonal of $X^T\mat{U}+\mat{U}^TX$ is always zero for any matrix $X$ and
$\mat{U}$, hence Problem~\ref{pb:trss} has no solution as soon as $\mat{C}$ has a non-zero
diagonal element. 

However in characteristic zero or odd, Algorithm~\ref{alg:trss} presents a
recursive implementation of this 
routine, and is in the same time a constructive proof of the existence of such a
solution. Note that it performs a division by 2 in line~\ref{line:halving}, and
therefore requires that the base field has not characteristic two.
\begin{algorithm}[htb]
\caption{\TRSS($\mat{U},\mat{C}$)}\label{alg:trss}
\begin{algorithmic}[1]
  \REQUIRE{ $\mat{U}$, $n \times n$ full-rank upper triangular}
  \REQUIRE{ $\mat{C}$, $n \times n$, symmetric}
  \ENSURE{  $\mat{C}\leftarrow \mat{X}$ where $\mat{X}$ is $n \times n$ upper triangular, such that $\mat{X}^T\mat{U}+\mat{U}^T\mat{X}=\mat{C}$.}
  \IF {$m=1$}
  \STATE $ \mat{C}_{1,1}\leftarrow \frac{1}{2}\mat{C}_{1,1}{\cdot}\mat{U}_{1,1}^{-1} $
  ; \label{line:halving} \textbf{return}
  \ENDIF
    \STATE Splitting $\mat{C}= \begin{smatrix} \mat{C}_1&\mat{C}_2\\\mat{C}_2^T&\mat{C}_3 \end{smatrix}$, $U= \begin{smatrix} \mat{U}_1&\mat{U}_2\\&\mat{U}_3 \end{smatrix}$ where
      $\mat{C}_1$ and $\mat{U}_1$ are $\left\lfloor \frac{n}{2} \right\rfloor\times \left\lfloor \frac{n}{2} \right\rfloor$.
  \STATE 
  \STATE Find $\mat{X}_1$ s.t. $\mat{X}_1^T\mat{U}_1+\mat{U}_1^T\mat{X}_1=\mat{C}_1$\;\hfill\COMMENT{\TRSS($\mat{U}_1,\mat{C}_1$)}
  \STATE $\mat{D}_2\leftarrow{}\mat{C}_2-\mat{X}_1^T\mat{U}_2$\;\hfill\COMMENT{\TRMM($\mat{X}_1^T,\mat{U}_2$)}
  \STATE $\mat{X}_2\leftarrow{}\mat{U}_1^{-T}\mat{D}_2$\;\hfill\COMMENT{\TRSM($\mat{U}_1^T,\mat{D}_2$)}
  \STATE $\mat{D}_3\leftarrow{}\mat{C}_3-(\mat{X}_2^T\mat{U}_2+\mat{U}_2^T\mat{X}_2)$\;\hfill\COMMENT{\SYRDtwoK($\mat{X}_2,\mat{U}_2$)}
  \STATE Find $\mat{X}_3$ s.t. $\mat{X}_3^T\mat{U}_3+\mat{U}_3^T\mat{X}_3=\mat{D}_3$\;\hfill\COMMENT{\TRSS($\mat{U}_3,\mat{D}_3$)}
\end{algorithmic}
\end{algorithm}

\begin{remark} Note that algorithm~\ref{alg:trss} computes the solution $X$ in
  place on the symmetric storage of $C$: by induction $\mat{X}_1$ and $\mat{X}_3$ overwrite
  $\mat{C}_1$ and $\mat{C}_3$, and $\mat{X}_2$ overwrites $\mat{C}_2$ according to the specifications of
  the generalized $\TRMM$ routine.
\end{remark}

\begin{lemma}\label{lem:trss}
  Algorithm \TRSS is correct and runs in $O(n^\omega)$ arithmetic operations.
\end{lemma}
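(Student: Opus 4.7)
The plan is to prove correctness by induction on $n$ and to derive the $O(n^\omega)$ bound from the divide-and-conquer recurrence implicit in Algorithm~\ref{alg:trss}.

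For the base case $n=1$, the equation $\mat{X}^T\mat{U}+\mat{U}^T\mat{X}=\mat{C}$ reduces to the scalar identity $2\mat{X}_{1,1}\mat{U}_{1,1}=\mat{C}_{1,1}$, which is solved by the halving in line~\ref{line:halving}; this is the only place where the assumption that the characteristic differs from $2$ is used. For the inductive step, I will assume that the two recursive calls return upper triangular matrices $\mat{X}_1$ and $\mat{X}_3$ satisfying $\mat{X}_1^T\mat{U}_1+\mat{U}_1^T\mat{X}_1=\mat{C}_1$ and $\mat{X}_3^T\mat{U}_3+\mat{U}_3^T\mat{X}_3=\mat{D}_3$, and then verify that the matrix $\mat{X}=\begin{smatrix}\mat{X}_1&\mat{X}_2\\&\mat{X}_3\end{smatrix}$ satisfies $\mat{X}^T\mat{U}+\mat{U}^T\mat{X}=\mat{C}$. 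Upper triangularity of $\mat{X}$ follows at once from that of $\mat{X}_1$ and $\mat{X}_3$. Expanding the product into four blocks, the $(1,1)$ block is $\mat{C}_1$ by the inductive hypothesis; the $(1,2)$ block equals $\mat{X}_1^T\mat{U}_2+\mat{U}_1^T\mat{X}_2$, which telescopes to $\mat{C}_2$ once $\mat{X}_2=\mat{U}_1^{-T}(\mat{C}_2-\mat{X}_1^T\mat{U}_2)$ is substituted according to the construction of $\mat{D}_2$ and $\mat{X}_2$; the $(2,1)$ block then matches $\mat{C}_2^T$ by transposition; finally, the $(2,2)$ block, after replacing $\mat{X}_3^T\mat{U}_3+\mat{U}_3^T\mat{X}_3$ by $\mat{D}_3=\mat{C}_3-\mat{X}_2^T\mat{U}_2-\mat{U}_2^T\mat{X}_2$, collapses to $\mat{C}_3$.

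For the complexity, writing $T(n)$ for the arithmetic cost of \TRSS on an $n\times n$ input, the two recursive calls contribute $2T(n/2)$ while the intermediate \TRMM, \TRSM, and \SYRDtwoK invocations all operate on blocks of size $n/2$ and each cost $O(n^\omega)$. Solving the recurrence $T(n)=2T(n/2)+O(n^\omega)$ yields $T(n)=O(n^\omega)$ for any admissible matrix multiplication exponent $\omega>2$. The main obstacle I anticipate is not algebraic but book-keeping: it will be necessary to check carefully that the in-place conventions of the generalized subroutines recalled in Section~\ref{sec:tools}, together with the symmetric storage of $\mat{C}$, faithfully realize the block assignments used in the argument above, so that the matrix effectively stored at the end of the algorithm is indeed the $\mat{X}$ just analyzed.
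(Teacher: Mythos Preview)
Your proposal is correct and follows essentially the same argument as the paper: form $\mat{X}=\begin{smatrix}\mat{X}_1&\mat{X}_2\\&\mat{X}_3\end{smatrix}$, expand $\mat{X}^T\mat{U}+\mat{U}^T\mat{X}$ blockwise to recover $\mat{C}$, and read off the $O(n^\omega)$ bound from the recurrence $T(n)=2T(n/2)+O(n^\omega)$. Your additional remarks on the base case and the in-place storage are fine but not strictly needed for the lemma (the paper handles the storage issue in a separate remark).
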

\begin{proof}
  Using the notations of Algorithm~\ref{alg:trss}, let $\mat{X}=
  \begin{bmatrix}    \mat{X}_1&\mat{X}_2\\&\mat{X}_3  \end{bmatrix}$. Then exanding $\mat{X}^T\mat{U}+\mat{U}^T\mat{X}$ gives
  \begin{eqnarray*}
  \\
    \begin{bmatrix}
    \mat{X}_1^T\mat{U}_1+\mat{U}_1^T\mat{X}_1& \mat{X}_1^T\mat{U}_2 + \mat{U}_1^T\mat{X}_2\\
    (\mat{X}_1^T\mat{U}_2 + \mat{U}_1^T \mat{X}_2)^T &   \mat{X}_2^T\mat{U}_2+\mat{U}_2^T\mat{X}_2+\mat{X}_3^T\mat{U}_3+\mat{U}_3^T\mat{X}_3
    \end{bmatrix} \\
=    \begin{bmatrix}
    \mat{C}_1 & \mat{C}_2\\
    \mat{C}_2^T & \mat{C}_3-\mat{D}_3 +\mat{X}_3^T\mat{U}_3+\mat{U}_3^T\mat{X}_3
    \end{bmatrix}     =\mat{C}.
    \end{eqnarray*}
  which proves the correctin by induction.
  The arithmetic cost satisfy a recurrence of the form $T(n)=2T(n/2)+Cn^\omega$
  and is therefore $T(n)=O(n^\omega)$.
\end{proof}

\section{A block recursive algorithm}\label{sec:algo}

\subsection{Sketch of the recursive algorithm}\label{ssec:reccase}

The design  of a block recursive algorithm is based on the generalization of
the $2\times 2$ case into a block $2\times 2$ block algorithm. While scalars
could be either 0 or invertible, the difficulty in elimination algorithms, is
that a submatrix could be rank defficient but non-zero. We start here an
overview of the recursive algorithm by considering that the leading principal
block is either all zero or invertible. We will later give the general
presentation of the algorithm where its rank could be arbitrary.


Let  $\mat{M}\in\F^{(m+n){\times}(m+n)}$ be the symmetric matrix to be
factorized. Consider its block decomposition 
$\mat{M}=\begin{bmatrix} \mat{A} & \mat{B}\\ \mat{B}^T & \mat{C}\end{bmatrix}$ where $\mat{A}\in\F^{m\times m}$ and
$\mat{C}\in\F^{n\times n}$ are also symmetric. 

If $\mat{A}$ is full rank, then a recursive call will produce 
$\mat{A}=\mat{P}\mat{L}\mat{D}\mat{L}^TP^T$, and $\mat{M}$ can thus be decomposed as:
$$\mat{M}=
\begin{bmatrix}\mat{P}&\mat{0}\\\mat{0}&\mat{I}\end{bmatrix}
\begin{bmatrix}\mat{L}&\mat{0}\\\mat{G}&\mat{I}\end{bmatrix}
\begin{bmatrix}\mat{D}&\mat{0}\\\mat{0}&Z\end{bmatrix}
\begin{bmatrix}\mat{L}^T&\mat{G}^T\\\mat{0}&\mat{I}\end{bmatrix}
\begin{bmatrix}\mat{P}^T&\mat{0}\\\mat{0}&\mat{I}\end{bmatrix},$$
where $\mat{G}$ is such that $\mat{P}\mat{L}\mat{D}\mat{G}^T=\mat{B}$ and $\mat{Z}=\mat{C}-\mat{G}\mat{D}\mat{G}^T$.
Thus $\mat{G}$ can be computed as the transpose of
$\mat{D}^{-1}\mat{L}^{-1}\mat{P}^{-1}\mat{B}$ which can be
obtained by a call to \TRSM, some permutations and a diagonal scaling. Then
$\mat{Z}$ is computed by a call to \SYRDK. A second recursive call will then
decompose~$\mat{Z}$ and lead to the final factorization of $\mat{M}$.

Now if $\mat{A}$ is the zero matrix, one is reduced to factorize the matrix 
$\mat{N}=\begin{smatrix} \mat{0} & \mat{B}\\ \mat{B}^T & \mat{C}\end{smatrix}$. In order to recover the rank
profile matrix, one has to first look for pivots in $\mat{B}$ before considering the
block $\mat{C}$. Therefore diagonal pivoting is not an option here.
Then the matrix $\mat{B}$, which we assume has full rank for the moment, can be decomposed in a
$PLDUQ$ factorization ($\mat{P}$ and $\mat{Q}$ permutation matrices, $\mat{L}$ and $\mat{U}$ 
respectively unit lower and unit upper triangular, $\mat{D}$ is diagonal).
We then need to distinguish two cases depending on whether the field
characteristic is two or not.

\subsubsection{Zero or odd characteristic case}
If the characteristic zero or odd, $\mat{N}$ can thus be decomposed as:
$$\mat{N}=
\begin{bmatrix}\mat{P}&\mat{0}\\\mat{0}&\mat{Q}^T\end{bmatrix}
\begin{bmatrix}\mat{L}&\mat{0}\\\mat{G}&\mat{U}^T\end{bmatrix}
\begin{bmatrix}\mat{0}&\mat{D}\\\mat{D}&\mat{0}\end{bmatrix}
\begin{bmatrix}\mat{L}^T&\mat{G}^T\\\mat{0}&\mat{U}\end{bmatrix}
\begin{bmatrix}\mat{P}^T&\mat{0}\\\mat{0}&\mat{Q}\end{bmatrix},$$
where $\mat{G}$ is such that $\mat{Q}^T(\mat{G}D\mat{U}+\mat{U}^TD\mat{G}^T)\mat{Q}=C$.
To compute $\mat{G}$, one can first permute $C$ to get $C'=\mat{Q}C\mat{Q}^T$ (which remains
symmetric) and then use a call to \TRSS.

\subsubsection{Characteristic two case}\label{ssec:zlp}
In characteristic two, the equation $\mat{G}D\mat{U}+\mat{U}^TD\mat{G}^T=\mat{Q}C\mat{Q}^T$ in unknown $\mat{G}$ has in
general  no solution (as soon as $\mat{C}$ has a non-zero diagonal element).

However, one can still relax Problem~\ref{pb:trss} and allow the elimination to
leave a diagonal of elements not zeroed out.
Following Lemma~\ref{lem:cex}, the idea is then to
decompose $\mat{N}$ into a block tridiagonal form: 
$$\mat{N}=
\begin{bmatrix}\mat{P}&\mat{0}\\\mat{0}&\mat{Q}^T\end{bmatrix}
\begin{bmatrix}\mat{L}&\mat{0}\\\mat{G}&\mat{U}^T\end{bmatrix}
\begin{bmatrix}\mat{0}&\mat{D}\\\mat{D}&{\Delta}\end{bmatrix}
\begin{bmatrix}\mat{L}^T&\mat{G}^T\\\mat{0}&\mat{U}\end{bmatrix}
\begin{bmatrix}\mat{P}^T&\mat{0}\\\mat{0}&\mat{Q}\end{bmatrix},$$
where ${\Delta}$ is a diagonal matrix and now $\mat{G}$ is such that
$\mat{Q}^T(\mat{G}D\mat{U}+\mat{U}^T\mat{D}\mat{G}^T+\mat{U}^T{\Delta} \mat{U})\mat{Q}=C$.
Therefore ${\Delta}$ can be chosen such that the diagonal of 
$C''=\mat{Q}C\mat{Q}^T-\mat{U}^T{\Delta} \mat{U}=C'-\mat{U}^T{\Delta} \mat{U}$ is zero. 
As $\mat{U}$ is unit upper triangular, a simple pass over its
coefficients is sufficient to find such a ${\Delta}$: let
${\Delta}_{ii}=\mat{C'}_{ii}-\sum_{j=1}^{i-1}{\Delta}_{jj} \mat{U}_{j,i}^2$.
The algorithm is thus to permute $\mat{C}$ to get $\mat{C}'$;
then compute ${\Delta}$ with the recursive relation above
and update $\mat{C''}=\mat{C'}-\mat{U}^T{\Delta} \mat{U}$ with a \SYRDK. 
$\mat{C''}$ remains symmetric but with a zero diagonal and now \TRSS can be applied.

\subsection{The actual recursive algorithm}

\subsubsection{First phase: recursive elimination}
In the general case, the leading matrices are not full rank, and we have to
consider intermediate steps.
For the symmetric matrix $M\in\F^{(m+n){\times}(m+n)}$ of
Section~\ref{ssec:reccase}, its leading principal block $\mat{A}\in\F^{m{\times}m}$ is
of rank $r{\leq}m$.  
Thus its actual recursive decomposition is of the form:
$$\mat{A}=\mat{P}_1
\begin{bmatrix}\mat{L}_1\\\mat{M}_1\end{bmatrix}
\begin{bmatrix}\mat{D}_1\end{bmatrix}
\begin{bmatrix}\mat{L}_1^T&\mat{M}_1^T\end{bmatrix}
\mat{P}_1^T,$$
where $\mat{L}_1\in\F^{r{\times}r}$ is full rank
unit lower triangular, $\mat{D}_1\in\F^{r{\times}r}$ is block diagonal with
1 or 2-dimensional diagonal blocks, and $\mat{M}_1\in\F^{(m-r){\times}r}$.
Therefore, forgetting briefly the permutations, the decomposition of $M$
becomes: $$\mat{M}=
\begin{bmatrix}\mat{L}_1&\mat{0}&\mat{0}\\\mat{M}_1&\mat{I}&\mat{0}\\\mat{G}&\mat{0}&\mat{I}\end{bmatrix}
\begin{bmatrix}\mat{D}_1&\mat{0}&\mat{0}\\\mat{0}&\mat{0}&\mat{Y}\\\mat{0}&\mat{Y}^T&\mat{Z}\end{bmatrix}
\begin{bmatrix}\mat{L}_1^T&\mat{M}_1^T&\mat{G}^T\\\mat{0}&\mat{I}&\mat{0}\\\mat{0}&\mat{0}&\mat{I}\end{bmatrix},$$
where $\mat{Y}$ is such that 
$\mat{B}=\begin{bmatrix}\mat{L}_1\\\mat{M}_1\end{bmatrix}
\mat{D}_1\mat{G}^T+\begin{bmatrix}\mat{0}\\\mat{Y}\end{bmatrix}$.

From this point on, there remains to factorize the submatrix
$\begin{smatrix} \mat{0}&\mat{Y}\\\mat{Y}^T&\mat{Z}\end{smatrix}$. This will be carried out by the
algorithm described in the next section, working on a matrix with a zero 
leading principal submatrix.  
Supposing for now that this is possible, Algorithm~\ref{alg:recall} summarizes
the whole procedure.

\begin{algorithm}[htb]
\caption{Recursive symmetric indefinite elimination}\label{alg:recall}
\begin{algorithmic}[1]
  \REQUIRE $\mat{A}\in\F^{m{\times}m}$ and $\mat{C}\in\F^{n{\times}n}$ both symmetric,
  $\mat{B}\in\F^{m{\times}n}$.
  \ENSURE $\mat{P}$ permutation, $\mat{L}$ unit lower triangular, $\mat{D}$ block diagonal,
  s.t. $\begin{smatrix} \mat{A} & \mat{B} \\ \mat{B}^T & \mat{C}\end{smatrix}=\mat{P}\mat{L}\mat{D}\mat{L}^T\mat{P}^T$. 
\STATE Decompose $\mat{A}=\mat{P}_1
\begin{bmatrix}\mat{L}_1\\\mat{M}_1\end{bmatrix}
\begin{bmatrix}\mat{D}_1\end{bmatrix}
\begin{bmatrix}\mat{L}_1^T&\mat{M}_1^T\end{bmatrix}
\mat{P}_1^T$\;\hfill\COMMENT{Alg.~\ref{alg:recall}}
\STATE let $r=\rank(\mat{B})$ s.t. $\mat{L}_1, \mat{D}_1$ and $\mat{U}_1$ are $r\times r$.
\STATE $\mat{B'}=\mat{P}_1^T\mat{B}$\;\hfill\COMMENT{\PERM$(\mat{P}_1^T,\mat{B})$}
  \STATE  Split $\mat{B'} =\begin{bmatrix}\mat{B'}_1\\ \mat{B}'_2\end{bmatrix}$ where $\mat{B'}_1$ is
  $r\times n$.
  \STATE $X\leftarrow{}\mat{L}_1^{-1}\mat{B'}_1$\;\hfill\COMMENT{\TRSM($\mat{L}_1,\mat{B}_1'$)}
  \STATE $\mat{Y}\leftarrow{}\mat{B'}_2-\mat{M}_1\mat{X}$\;\hfill\COMMENT{\GEMM($\mat{B}_2',\mat{M}_1,\mat{X}$)}
  \STATE $\mat{G}\leftarrow{}X^T\mat{D}_1^{-1}$\;\hfill\COMMENT{\SCAL$(X^T,\mat{D}_1^{-1})$}
  \STATE $\mat{Z}\leftarrow{}C-\mat{G}\mat{D}_1\mat{G}^T$\;\hfill\COMMENT{\SYRDK($C,\mat{G},\mat{D}_1$)}
  \STATE Decompose
  $\begin{bmatrix} \mat{0} & \mat{Y}\\ \mat{Y}^T & \mat{Z}\end{bmatrix}=
  \mat{P}_2\mat{L}_2\mat{D}_2\mat{L}_2^T\mat{P}_2^T$\;\hfill\COMMENT{Alg.~\ref{alg:zlp}}
  \STATE $P\leftarrow{}\begin{smatrix}\mat{P}_1&0\\0&\mat{I}_n\end{smatrix} \cdot{}
  \begin{smatrix}\mat{I}_{r}&0\\0&\mat{P}_2\end{smatrix}$\;
    \STATE $\mat{N}_1 \leftarrow \mat{P}_2^T \begin{bmatrix} \mat{M}_1\\\mat{G}\end{bmatrix}$\hfill\COMMENT{\PERM$(\mat{P}_2^T, \begin{bmatrix}  \mat{M}_1\\\mat{G}  \end{bmatrix}  )$}
    \STATE $L\leftarrow{}\begin{bmatrix}[c|c]\mat{L}_1&\\\hline\begin{matrix}
      \mat{N}_1\end{matrix}&\mat{L}_2\end{bmatrix}$\; 
  \STATE $D\leftarrow{}\begin{bmatrix}[c|c]\mat{D}_1&\\\hline&\mat{D}_2\end{bmatrix}$\;
\end{algorithmic}
\end{algorithm}

\subsubsection{Second phase: off-diagonal pivoting}

Consider $\mat{N}=\begin{smatrix} \mat{0} & \mat{B}\\ \mat{B}^T & \mat{C}\end{smatrix}$, where
$\mat{B}\in\F^{m{\times}n}$, with $m{\leq}n$, has now an arbitrary rank
$r{\leq}m$. Then its
PLDUQ decomposition is of the form
$$\mat{B}=\mat{P}
\begin{bmatrix}\mat{L}_1\\\mat{M}_1\end{bmatrix}
\begin{bmatrix}\mat{D}_1\end{bmatrix}
\begin{bmatrix}\mat{U}_1&\mat{V}_1\end{bmatrix}
\mat{Q},
$$
with $\mat{D}_1$ diagonal, and $\mat{L}_1$ and $\mat{U}_1$ unit square triangular matrices, all
three of order~$r$.
Then consider a conformal block decomposition of
$\mat{Q}\mat{C}\mat{Q}^T=\begin{smatrix}\mat{C}_1&\mat{C}_2\\\mat{C}_2^T&\mat{C}_3\end{smatrix}$  where $\mat{C}_1$ is $r\times r$.
It remains to eliminate $\mat{C}_1$ and $\mat{C}_2$ with the pivots found in $B$, which
leads to the following factorization:
\begin{equation}\label{eq:zlp}
\begin{split}
\mat{N}=
\begin{bmatrix} \mat{P}_1\\&\mat{Q}_1^T\end{bmatrix}
\begin{bmatrix}\mat{L}_1&  0& 0\\\mat{M}_1&  0& 0\\\mat{G}_1&  \mat{U}_1^T& 0\\\mat{G}_2&\mat{V}_1^T&\mat{I}\end{bmatrix}
\begin{bmatrix}0&\mat{D}_1&0\\ \mat{D}_1&0&0\\0&0&\mat{Z}\end{bmatrix} \times \\
\begin{bmatrix}\mat{L}_1^T&\mat{M}_1^T&\mat{G}_1^T&\mat{G}_2^T\\0&0&\mat{U}_1&\mat{V}_1\\0&0&0&\mat{I}\end{bmatrix}
  \begin{bmatrix} \mat{P}_1^T\\&\mat{Q}_1\end{bmatrix}
    \end{split}
\end{equation}

where $\mat{G}_1$ satisfies \begin{equation}\label{eq:G1}
  \mat{U}_1^T\mat{D}_1\mat{G}_1^T+\mat{G}_1\mat{D}_1\mat{U}_1=\mat{C}_1
\end{equation}
  and
$\mat{G}_2=(\mat{C}_2-\mat{V}_1^T\mat{D}_1\mat{G}_1^T)\mat{U}_1^{-1}\mat{D}_1^{-1}$ and $\mat{Z}=\mat{C}_3-(\mat{V}_1^T\mat{D}_1\mat{G}_2^T+\mat{G}_2\mat{D}_1\mat{V}_1)$.

In order to produce a LDLT decomposition, there still remains to perform
permutations to
\begin{compactenum}
\item compact the leading elements of the lower triangular matrix into a
  $2r\times 2r$ invertible leading triangular submatrix,\label{enum:perm:rotations}
\item make the $  \begin{smatrix}    &\mat{D}_1\\\mat{D}_1  \end{smatrix}$ matrix block
  diagonal with 1 or 2-dimensional diagonal blocks.\label{enum:perm:interleave}
\end{compactenum}

The permutation matrix \begin{equation}\label{eq:p1}
\mat{P}_c=\begin{smatrix}\mat{I}_r& 0& 0&0\\0&0& \mat{I}_{m-r}&0\\0&\mat{I}_r&0&0\\0&0&0&\mat{I}_{n-r}\end{smatrix},
\end{equation}
corresponding to a block circular rotation, takes care of
condition~\ref{enum:perm:rotations}, while preserving precedence in the
non-pivot rows. This is a requirement for the factorization to reveal the rank
profile matrix~\cite{jgd:2017:bruhat}.
The decomposition becomes
\begin{equation}
\begin{split}
N=
\begin{bmatrix} \mat{P}_1\\&\mat{Q}_1^T\end{bmatrix} \mat{P}_c
\begin{bmatrix}\mat{L}_1&  0& 0\\\mat{G}_1&  \mat{U}_1^T& 0\\\mat{M}_1&  0& 0\\\mat{G}_2&\mat{V}_1^T&\mat{I}\end{bmatrix}
\begin{bmatrix}0&\mat{D}_1&0\\ \mat{D}_1&0&0\\0&0&\mat{Z}\end{bmatrix} \times \\
\begin{bmatrix}\mat{L}_1^T&\mat{G}_1^T&\mat{M}_1^T&\mat{G}_2^T\\0&\mat{U}_1&0&\mat{V}_1\\0&0&0&\mat{I}\end{bmatrix}
\mat{P}_c^T  \begin{bmatrix} \mat{P}_1^T\\&\mat{Q}_1\end{bmatrix}
    \end{split}
\end{equation}



In order to achieve Condition~\ref{enum:perm:interleave}, we will transform the
matrix $\begin{smatrix} 0&\mat{D}_1\\\mat{D}_1&0\end{smatrix}$ into the block diagonal
  matrix $\text{Diag}(\begin{smatrix} 0&d_i\\d_i&0\end{smatrix})$ where $d_i$ is
    the $i$th diagonal element in $\mat{D}_1$.
To describe the process, we will focus on the matrix
 $$\mat{N}_2=
\begin{bmatrix}\mat{L}_1& 0\\\mat{G}_1& \mat{U}_1^T\end{bmatrix}
\begin{bmatrix}0&\mat{D}_1\\\mat{D}_1&0\end{bmatrix}
\begin{bmatrix}\mat{L}_1^T&\mat{G}_1^T\\0&\mat{U}_1\end{bmatrix}
=\overline{L}\cdot\overline{\Delta}\cdot\overline{L}^T,$$
and consider a splitting in halves of the matrix
$\mat{D}_1=\begin{smatrix}  \mat{D}_{11}\\&\mat{D}_{12}\end{smatrix}$ where $\mat{D}_{11}$ has order
$r_1$ and $\mat{D}_{12}$ order $r_2$. This leads to the conformal decompostion
{\small
$$\begin{bmatrix}\mat{L}_{11}&0&0&0\\\mat{L}_{12}&\mat{L}_{13}&0&0\\\mat{G}_{11}&\mat{G}_{14}&\mat{U}_{11}^T&0\\\mat{G}_{12}&\mat{G}_{13}&\mat{U}_{12}^T& \mat{U}_{13}^T\end{bmatrix}
\begin{bmatrix}0&0&\mat{D}_{11}&0\\0&0&0&\mat{D}_{12}\\\mat{D}_{11}&0&0&0\\0&\mat{D}_{12}&0&0\end{bmatrix}
\begin{bmatrix}\mat{L}_{11}^T&\mat{L}_{12}^T&\mat{G}_{11}^T&\mat{G}_{12}^T\\0&\mat{L}_{13}^T&\mat{G}_{14}^T&\mat{G}_{13}^T\\0&0&\mat{U}_{11}&\mat{U}_{12}\\0&0&0&\mat{U}_{13}\end{bmatrix}
$$
}
Then considering the permutation matrix
$$\mat{P}_d=\begin{smatrix}\mat{I}_{r_1}& 0& 0&0\\0&0& \mat{I}_{r_2}&0\\0&\mat{I}_{r_1}&0&0\\0&0&0&\mat{I}_{r_2}\end{smatrix},$$
one  can form 
$\mat{P}_d^T\overline{\Delta}\mat{P}_d=\begin{smatrix}0&\mat{D}_{11}&0&0\\\mat{D}_{11}&0&0&0\\0&0&0&\mat{D}_{12}\\0&0&\mat{D}_{12}&0\end{smatrix}$
and
$\mat{P}_d^T\overline{L}\mat{P}_d=
\begin{smatrix}\mat{L}_{11}&0&0&0\\\mat{G}_{11}&\mat{U}_{11}^T&\mat{G}_{14}&0\\\mat{L}_{12}&0&\mat{L}_{13}&0\\\mat{G}_{12}&\mat{U}_{12}^T&\mat{G}_{13}&
  \mat{U}_{13}^T\end{smatrix}$.
  Applying this process recursively changes $\overline \Delta$ into the desired
  block diagonal form. Then the transformation of $\overline L$ will remain
  lower triangular if and only if all $\mat{G}_{14}$ matrices are zero: this means
  that  $\mat{G}_1$ must be lower triangular in the first place.

Finding $\mat{G}_1$ lower triangular satifying Equation~\eqref{eq:G1}, is an instance
of Problem~\ref{pb:trss} for which the routine \TRSS provides a solution.

Note that the actual permutation to transform 
$\begin{smatrix}\mat{0}&\mat{D}_1\\\mat{D}_1&\mat{0}\end{smatrix}$ into a $2{\times}2$-blocks diagonal
matrix is a permutation matrix, $\mat{P}_i$, resulting from the one by one
interleaving of the rows of $\begin{smatrix}\mat{I}_r&\mat{0}\end{smatrix}$ and
$\begin{smatrix}\mat{0}&\mat{I}_r\end{smatrix}$. If
$\mat{e}_i=\begin{smatrix}0\ldots{}0&1&0\ldots{}0\end{smatrix}^T$ is the $i$-th canonical vector, then:
\begin{equation}\label{eq:p2}
\mat{P}_i=\begin{bmatrix}\mat{e}_1&\mat{e}_{r+1}&\mat{e}_2&\mat{e}_{r+2}&\ldots&\mat{e}_{r}&\mat{e}_{2r}\end{bmatrix}.
\end{equation}
Similarly the triangular factor of the factorization is thus a one by one
interleaving of the rows of $\begin{smatrix}\mat{L}_1&\mat{0}\end{smatrix}$ and
$\begin{smatrix}\mat{G}_1&\mat{U}_1^T\end{smatrix}$ as well as a one by one interleaving of
the columns $\begin{smatrix}\mat{L}_1\\\mat{G}_1\end{smatrix}$ and
$\begin{smatrix}\mat{0}\\\mat{U}_1^T\end{smatrix}$, which overall remains triangular.

Finally, a call to Algorithm~\ref{alg:recall} produces a factorization for the
remaining $Z$ block and a final block rotation,  
$$\begin{bmatrix}\mat{I}_{2r}&\mat{0}&\mat{0}\\\mat{0}&\mat{0}&\mat{I}_{m-r}\\\mat{0}&\mat{I}_{n-r}&\mat{0}\end{bmatrix},$$ 
moves the intermediate zero rows and columns to the bottom right.
The full algorithm is presented in details in Algorithm~\ref{alg:zlp} (for zero
or odd characteristic, the characteristic two case being presented afterwards in
Section~\ref{ssec:even}).

\begin{algorithm}[htbp]
\caption{Rank deficient and zero leading principal symmetric elimination}\label{alg:zlp}
\begin{algorithmic}[1]
  \REQUIRE $\mat{C}\in\F^{n{\times}n}$ symmetric and $\mat{B}\in\F^{m{\times}n}$.
  \ENSURE $\mat{P}$ permutation, $L$ unit lower triangular, $\mat{D}$ block-diagonal, 
  s.t. $\begin{bmatrix} \mat{0} & \mat{B}\\ \mat{B}^T & \mat{C}\end{bmatrix}=\mat{P}\mat{L}\mat{D}\mat{L}^T\mat{P}^T$.
  \STATE Decompose $\mat{B}=\mat{P}_B
\begin{bmatrix}\mat{L}_1\\\mat{M}_1\end{bmatrix}
\begin{bmatrix}\mat{D}_1\end{bmatrix}
\begin{bmatrix}\mat{U}_1&\mat{V}_1\end{bmatrix}
Q$\;\hfill\COMMENT{\PLDUQ}
  \STATE
  $\mat{C'}\leftarrow \mat{Q}\mat{C}\mat{Q}^T=\begin{bmatrix}\mat{C'}_1&\mat{C'}_2\\{\mat{C'}_2}^T&\mat{C'}_3\end{bmatrix}$\;\hfill\COMMENT{$\mat{C'}_1$ first $r=rk(\mat{B})$ rows/columns: \PERM}
  \IF{characteristic $(\F) =2$}
    \FOR{$i=1$ \TO $r$}
      \STATE $\Delta_{ii}=\left(\mat{C'}_1\right)_{ii}-\sum_{j=1}^{i-1} \Delta_{jj} \left(\mat{U}_1\right)_{j,i}^2$\;
    \ENDFOR
    \STATE $\mat{C'}_1\leftarrow{}\mat{C'}_1-\mat{U}_1^T{\Delta}\mat{U}_1$\;\hfill\COMMENT{\SYRDK($\mat{C}_1',\mat{U}_1,\Delta$)}
  \ENDIF
  \STATE $\mat{X}^T\mat{U}_1+\mat{U}_1^T\mat{X}=\mat{C'}_1$\;\hfill\COMMENT{\TRSS($\mat{U}_1,\mat{C'}_1$)}
  \STATE $\mat{G}_1\leftarrow{}\mat{X}^T\mat{D}_1^{-1}$\;\hfill\COMMENT{\SCAL$(\mat{X}^T,\mat{D}_1^{-1})$}
  \IFTHEN{characteristic$(\F)=2$}{$\mat{X}\leftarrow \mat{X}+\Delta\mat{U}_1$}\;\hfill\COMMENT{\DADD($X$,$\Delta$,$\mat{U}_1$)}
  \STATE $\mat{C''}_2\leftarrow{}\mat{C'}_2-X^T\mat{V}_1$\;\hfill\COMMENT{\TRMM($X^T,\mat{V}_1$)}
  \STATE $\mat{Y}\leftarrow{}\mat{U}_1^{-T}\mat{C''}_2$\;\hfill\COMMENT{\TRSM($\mat{U}_1^T,\mat{C''}_2$)}
  \STATE $\mat{Z}\leftarrow{}\mat{C'}_3-(\mat{Y}^T\mat{V}_1+\mat{V}_1^T\mat{Y})$\;\hfill\COMMENT{\SYRDtwoK($\mat{Y},\mat{V}_1$)}
  \IFTHEN{characteristic$(\F)=2$}{$\mat{Z}\leftarrow{}\mat{Z}-(\mat{V}_1^T\Delta{}\mat{V}_1)$}\;\hfill\COMMENT{\SYRDK($\mat{Z},\mat{V}_1,\Delta$)}
  \STATE $\mat{G}_2\leftarrow{}\mat{Y}^T\mat{D}_1^{-1}$\;\hfill\COMMENT{\SCAL$(\mat{Y}^T,\mat{D}_1^{-1})$}
  \STATE Decompose
  $\mat{Z}=\mat{P}_3\mat{L}_3\mat{D}_3\mat{L}_3^T\mat{P}_3^T$\;\hfill\COMMENT{Alg.~\ref{alg:recall}}
  \\\COMMENT{With $\mat{P}_c$ from (\ref{eq:p1}), and $\mat{P}_i$ from (\ref{eq:p2}):}  
  \STATE $P\leftarrow\begin{smatrix}\mat{P}_B&0\\0&Q^T\end{smatrix}
  \begin{smatrix}\mat{P}_c&0\\0&\mat{I}_{n-r}\end{smatrix}
  \begin{smatrix}\mat{P}_i&0\\0&\mat{I}_{m+n-2r}\end{smatrix}
  \begin{smatrix}\mat{I}_{m+r}&0\\0&\mat{P}_3\end{smatrix}
  \begin{smatrix}\mat{I}_{2r}&0&0\\0&0&\mat{I}_{m-r}\\0&\mat{I}_{n-r}&0\end{smatrix}$\;
  \STATE $L\leftarrow\begin{bmatrix}[c|c]\mat{P}_i^T\begin{smatrix}\mat{L}_1&\\\mat{G}_1&\mat{U}_1^T\end{smatrix}\mat{P}_i&\\\hline
  \begin{matrix}\mat{G}_2&\mat{V}_1^T \\ \mat{M}_1&0\end{matrix}&
  \begin{matrix}\mat{L}_3\\ 0\; \end{matrix}\end{bmatrix}$\;
  \STATE $D\leftarrow\begin{bmatrix}[c|c] \mat{P}_i^T\begin{smatrix}0&\mat{D}_1\\\mat{D}_1&0\end{smatrix}\mat{P}_i&\\\hline&\mat{D}_3\end{bmatrix}$\;
\end{algorithmic}
\end{algorithm}

\subsection{Characteristic two}\label{ssec:even}
The case of the characteristic two can be handled similarly, just computing the extra
diagonal and updating after the \PLDUQ decomposition, as sketched in
Section~\ref{ssec:zlp}. 
Indeed, the only issue is the division by $2$ in \TRSS, which is removed if the
diagonal of $\mat{C'}_1$ is zero. 
Therefore, Algorithm~\ref{alg:recall} is unchanged, the block diagonal matrix
just has lower symmetric antitriangular $2{\times}2$ blocks instead of only
antidiagonal ones. The only few additional operations appear in 
Algorithm~\ref{alg:zlp}  and are the contents of the
"{\algorithmicif{characteristic$(\F)=2$}}$\ldots$" branchings.

Then the tridiagonal form with symmetric antitriangular
${2\times}2$ blocks thus obtained by Algorithm~\ref{alg:zlp} can be used to
either reveal the rank profile matrix
(via computing $\Psi$, the support matrix of $D$, and the pivoting matrix
$\RPM{}=P\Psi P^T$) or a PLDLTPT factorization, both at an
extra linear cost, as shown in Section~\ref{ssec:antiti}.

Overall, we have proven:
\begin{theorem} Algorithm~\ref{alg:recall} correctly computes a symmetric
  indefinite PLDLTPT factorization revealing the rank profile matrix. 
\end{theorem}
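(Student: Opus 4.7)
The plan is a double induction: an outer induction on the size $m+n$ of the input, following the two mutually recursive procedures (Algorithm~\ref{alg:recall} calling Algorithm~\ref{alg:zlp} and vice versa), with an inner verification of each block identity by direct Schur-complement expansion. The base cases ($1{\times}1$, or a $2{\times}2$ antidiagonal/antitriangular block depending on the characteristic) are handled by the iterative Crout variant of Section~\ref{sec:basecase} and are immediate. For the inductive step in Algorithm~\ref{alg:recall}, I would expand the claimed product $\begin{smatrix}\mat{L}_1&0&0\\\mat{M}_1&\mat{I}&0\\\mat{G}&0&\mat{I}\end{smatrix}\begin{smatrix}\mat{D}_1&&\\&0&\mat{Y}\\&\mat{Y}^T&\mat{Z}\end{smatrix}\begin{smatrix}\mat{L}_1^T&\mat{M}_1^T&\mat{G}^T\\&\mat{I}&\\&&\mat{I}\end{smatrix}$ and check block by block that it reproduces $\begin{smatrix}\mat{A}&\mat{B}\\\mat{B}^T&\mat{C}\end{smatrix}$ after conjugation by the leading permutation, using the definitions of $\mat{Y}$, $\mat{G}$ and $\mat{Z}$ computed in lines~6--8 together with the inductive factorization of $\mat{A}$. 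The analogous computation for Algorithm~\ref{alg:zlp} uses Equation~\eqref{eq:G1} and Lemma~\ref{lem:trss} to guarantee that $\mat{G}_1$ exists and overwrites $\mat{C}'_1$ correctly, and the formula for $\mat{G}_2$ to eliminate $\mat{C}'_2$; in characteristic two the added correction by $\mat{U}_1^T\Delta\mat{U}_1$ ensures $\mat{C}'_1$ has zero diagonal so that \TRSS can still be applied, and the bookkeeping of $\Delta$ merely populates the bottom-right entries of the $2{\times}2$ antitriangular pivot blocks of $\mat{D}$.

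The second (and main) part is showing that the output reveals the rank profile matrix, i.e.\ $\mat{P}\Psi \mat{P}^T = \RPM{\mat{M}}$. The plan is to reduce this to the pivot-precedence criterion of~\cite{jgd:2017:bruhat}: a PLUQ-style elimination reveals $\RPM{\cdot}$ provided that at each step the selected pivots have minimal coordinates in the lexicographic order and that moving them to the diagonal is done via row/column rotations rather than transpositions, so that the relative order of the non-pivot rows and columns is preserved. I would first argue that in Algorithm~\ref{alg:recall} the recursive call on $\mat{A}$ identifies, by induction, precisely the pivots of $\RPM{\mat{A}}$ embedded as the top-left $m{\times}m$ block of $\RPM{\mat{M}}$. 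Then, by the definition of the rank profile matrix, the remaining pivots lie in the union of the rows/columns of $\mat{B}$ not yet used and the trailing block $\mat{C}$; the call to Algorithm~\ref{alg:zlp} on $\begin{smatrix}\mat{0}&\mat{Y}\\\mat{Y}^T&\mat{Z}\end{smatrix}$ therefore captures exactly these, provided it scans $\mat{B}$ (equivalently $\mat{Y}$) before $\mat{C}$ (equivalently $\mat{Z}$), which is the whole point of the off-diagonal pivoting phase.

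For Algorithm~\ref{alg:zlp} the subtle point is the ordering of the pivots inside the two blocks $\begin{smatrix}&\mat{D}_1\\\mat{D}_1&\end{smatrix}$ and on the trailing $\mat{Z}$. I would verify explicitly that the block circular rotation $\mat{P}_c$ of~\eqref{eq:p1} and the interleaving permutation $\mat{P}_i$ of~\eqref{eq:p2} both act as rotations rather than transpositions on the non-pivot rows, so that precedence is preserved in the sense of~\cite[Thm.\ on PLUQ]{jgd:2017:bruhat}; combined with the fact that \PLDUQ on $\mat{B}$ uses the same RPM-revealing pivoting, this places the $2r$ pivot positions of the $(\mat{D}_1,\mat{D}_1)$ block exactly at the rook positions prescribed by $\RPM{}$, and the final rotation $\begin{smatrix}\mat{I}_{2r}&&\\&0&\mat{I}_{m-r}\\&\mat{I}_{n-r}&0\end{smatrix}$ pushes the empty residual rows/columns to the bottom-right without disturbing precedence, so the support matrix $\Psi$ of the recursive output, conjugated by the assembled $\mat{P}$, yields $\RPM{\mat{M}}$.

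The main obstacle I anticipate is the characteristic-two case: the antitriangular $2{\times}2$ blocks enlarge the support $\Psi$, but the nonzero bottom-right entries sit in positions that are \emph{already} occupied by a pivot of the antidiagonal part, so after forming $\Psi$ as a $\{0,1\}$-matrix one still gets the same support and hence the same $\mat{P}\Psi \mat{P}^T$. I would make this explicit by noting that the transformation from the relaxed $\mat{P}\mat{L}\mat{D}\mat{L}^T\mat{P}^T$ to a strict one described in Section~\ref{ssec:antiti} only changes $\mat{L}$ and the values in $\mat{D}$ but not the pivot positions, so the RPM-revealing property transfers. With these verifications in hand the induction closes and the theorem follows.
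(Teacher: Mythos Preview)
Your plan is essentially the paper's own argument made explicit: the paper does not give a separate proof block for this theorem but states it as the summary of the construction carried out through Section~\ref{sec:algo} (the block Schur expansions of Algorithms~\ref{alg:recall} and~\ref{alg:zlp}, the definition of the rotations $\mat{P}_c$ and $\mat{P}_i$, and the appeal to the precedence-preserving pivoting criterion of~\cite{jgd:2017:bruhat}). Your inductive organization and block-by-block verification match this exactly.

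One small imprecision worth fixing: in the characteristic-two paragraph you argue that the nonzero bottom-right entry of an antitriangular block ``sits in a position already occupied by a pivot of the antidiagonal part''. Position $(2,2)$ is not on the antidiagonal, and if $\Psi$ were the literal $\{0,1\}$-support of $\mat{D}$ you would obtain $\begin{smatrix}0&1\\1&1\end{smatrix}$ rather than $\begin{smatrix}0&1\\1&0\end{smatrix}$. The reason your conclusion is nonetheless correct is the paper's \emph{definition} of $\Psi$ in Section~\ref{sec:rpm}: it is the block-diagonal $\{0,1\}$-matrix with $\mat{D}=\Psi\overline{\mat{D}}$ for $\overline{\mat{D}}$ upper bidiagonal, and the identity $\begin{smatrix}0&c\\c&d\end{smatrix}=\begin{smatrix}0&1\\1&0\end{smatrix}\begin{smatrix}c&d\\0&c\end{smatrix}$ shows $\Psi=\mat{J}$ independently of $d$. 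Replacing your support argument by this observation closes the only loose step.
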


\section{Base case iterative variant}\label{sec:basecase}

The recursion of Algorithm~\ref{alg:recall} should not be performed all the way
to a dimension 1 in practice.
For implementations over a finite field, it would induce an unnecessary large
number of modular reductions and a significant amount of data movement for the
permutations. Instead, we propose in Algorithm~\ref{alg:BC} an iterative algorithm computing a PLDLTPT
revealing the rank profile matrix to be used as a base case in the recursion.

This iterative algorithm has the following features:
\begin{compactenum}
  \item it uses a pivot search minimizing the lexicographic order
(following the caracterization in~\cite{jgd:2017:bruhat}): if the diagonal element
    of the current row is 0, the pivot is chosen as the first non-zero element
    of the row, unless the row is all zero, in which case, it is searched in the
    following row;
  \item the pivot is permuted with cyclic shifts on the row and columns, so as
    to leave the precedence in the remaining rows and columns unchanged.
  \item the update of the unprocessed part in the matrix is delayed following
    the scheme of a Crout elimination 
    schedule~\cite{DDSV98}. It does not only improves efficiency thanks to a
    better data locality, but it also reduces the amount of modular reductions,
    over a finite field, as shown for the unsymmetric case
    in~\cite{DGPRZ16}. 
\end{compactenum}

We denote by $\rho_{i,n}$ the cyclic shift permutation of order $n$ moving
element $i$ to the first position: $\rho_{i,n}=(i,0,1,\dots, i-1,i+1,\dots n-1)$.
Indices are 0 based, index ranges are excluding their upper bound. For instance,
$\mat{A}_{i,0..r}$ denotes the $r$ first elements of the $i+1$st row of $\mat{A}$, and
$\mat{A}_{0..r,0..r}$ is the 0-dimensional matrix when $r=0$.

\begin{algorithm}[htbp]
  \caption{SYTRF Crout iterative base case}\label{alg:BC}
  \begin{algorithmic}[1]
    \REQUIRE{$\mat{A} \in \F^{n\times n}$ symmetric}
    \ENSURE{$\mat{P}$, a permutation, $\mat{L}$, unit lower triangular and
      $\mat{D}$, block diagonal,  such that $\mat{A}=\mat{P}\mat{L}\mat{D}\mat{L}^T\mat{P}^T$}
    \STATE $r\leftarrow 0$; $\mat{D}\leftarrow \mat{0}$\;\hfill\COMMENT{Denote $\mat{W}=\mat{A}$ the working matrix}
    \FOR{$i=0..n$}
      \STATE Here $\mat{W}=\begin{bmatrix}  \mat{L} \\  \mat{M} & 0 & 0\\ \mat{N} & 0 & \mat{A}_{i..n,i}
      & \mat{A}_{i..n,i+1..n}  \end{bmatrix}$ with $\mat{L}\in\F^{r{\times}r}$
      \STATE $\mat{v}\leftarrow \mat{N}_{0,0..r}\times \mat{D}_{0..r,0..r}^{-1}$
      \STATE $\mat{c}\leftarrow \mat{A}_{i..n,i} -  \mat{N}\times \mat{v}^T$
      \IFTHEN{$\mat{c}=0$}{Loop to next iteration}
      \STATE Let $j$ be the smallest index such that $x=c_j\neq 0$
      \IF[Denote $\mat{c}=\lbrack\mat{0}\;x\;\mat{k}\rbrack^T$]{$j=0$}
        \STATE $ \begin{smatrix} \mat{M}\\\mat{N}  \end{smatrix} \leftarrow
        \rho_{j,n-r}\times \begin{smatrix} \mat{M}\\ \mat{N}  \end{smatrix}$
        \STATE $\mat{W}_{r..n,r}\leftarrow x^{-1}\times \mat{\rho}_{j,n-r}\times    \begin{smatrix}       0\\ \mat{c}    \end{smatrix}$
        \STATE $\mat{P}\leftarrow \mat{P} \times  \mat{\rho}_{j,n-r}^T$
        \STATE $\mat{D}_{r,r}\leftarrow x$
        \STATE $r\leftarrow r+1$
      \ELSE[Crout update of the row $i+j$]  
        \STATE $\mat{w}\leftarrow \mat{N}_{j,0..r}\times \mat{D}_{0..r,0..r}^{-1}$
        \STATE $\mat{d}\leftarrow \mat{A}_{i..n,j+i} -  \mat{N}\times \mat{w}^T (=
        \begin{bmatrix} \mat{0} & x & \mat{g}& y & \mat{h} \end{bmatrix}^T
        )$
        \STATE Here $\mat{W}=\begin{bmatrix}  \mat{L} \\
        \mat{M} & 0 & 0& 0&0&0 \\
        \multirow{4}{*}{$\mat{N}$} &0 & 0& 0&x&\mat{k}^T\\
        &0& 0 & \mat{F} & \mat{g}&\mat{J}^T \\
        &0&x & \mat{g}^T & y &\mat{h}^T\\
        & 0 & \mat{k} &\mat{J}&\mat{h}&*\end{bmatrix}$
        \IF{$\text{characteristic}(\F)=2$}
          \STATE $y'\leftarrow 0$
          \STATE $\mat{h'}\leftarrow\mat{h} - yx^{-1} \mat{k}$
          \STATE $\mat{D}_{r..r+2,r..r+2} \leftarrow   \begin{bmatrix} 0&x\\x&y   \end{bmatrix}$
        \ELSE
          \STATE $y'\leftarrow y/2$
          \STATE $\mat{h'}\leftarrow\mat{h} - y'x^{-1} \mat{k}$
          \STATE $\mat{D}_{r..r+2,r..r+2} \leftarrow   \begin{bmatrix} 0&x\\x&0   \end{bmatrix}$
        \ENDIF
        \STATE Perform cyclic symmetric row and column rotations to bring $W$ to
        the form
        $\mat{W}=\left[\begin{array}{ccc|ccc}  \mat{L} & &\\
        \multirow{2}{*}{$\mat{n}'$} &1&  \\
        &x^{-1}y' &1 \\
        \hline
        \mat{M}&0  & 0&0 &0&0\\
        \multirow{2}{*}{$\mat{N}'$}&x^{-1} \mat{g}  & 0 &0& \mat{F}&\mat{J}^T\\
        &x^{-1}\mat{h'}  & x^{-1}\mat{k}&0& \mat{J} &*
        \end{array}\right]$
        \STATE Update $\mat{P}$ accordingly
        \STATE $r\leftarrow r+2$
        \ENDIF
    \ENDFOR
  \end{algorithmic}
\end{algorithm}

\section{Experiments}\label{sec:expe}
We now report on experiments of an implementation of these algorithms in the
FFLAS-FFPACK library~\cite{FFLAS18}, dedicated to dense linear algebra over finite
fields. We used the version committed under the reference
\href{https://github.com/linbox-team/fflas-ffpack/commit/e12a9989f07a9e128d9b4dd59681e8f62e8fe3b1}{e12a998}
of the master branch. It was compiled with gcc-5.4 and was linked with the
numerical library OpenBLAS-0.2.18. Experiments are run on a single core of an an
Intel Haswell i5-4690, \symbol{64}3.5GHz.

Computation speed are normalized as \textit{effective Gfops}, an estimate of the
number of field operations that an algorithm with classic matrix arithmetic
would perform per second, divided by the computation time. For a matrix of order $n$
and rank $r$, we defined this as:
$$\text{Effective Gfops} = (r^3/3+n^2r-r^2n) / (10^9 \times\text{time}).$$
All experiments are over the 23-bits finite field $\Z/8388593\Z$.

Figure~\ref{fig:exp:BC} compares the computation speed of the pure recursive
algorithm, the base case algorithm and a cascade of these two, with a threshold
set to its optimum value from experiments on this machine.
Remark that the pure recursive variant performs rather well with generic rank
profile matrices, while matrices with uniformly random rank profile matrix make
this variant very slow, due to an excessive amount of pivoting.
As expected, the base case Crout variant speeds up these instances for small dimensions, but
then its performance stagnate on large dimensions, due to poor cache efficiency. Lastly the
cascade algorithm combines the benefits of the two variants and therefore
performs best in all settings.
We here used a threshold $n=128$ for the experiments with random RPM matrices, but of
only $n=48$ for generic rank profile matrices, since the recursive variant becomes
competive much earlier. In most cases, the rank profile structure of given
matrices is unknown a priori, making the setting of this threshold
speculative. One could instead implement an introspective strategy, updating
the threshold from experimenting with running instances.
\begin{figure}[htbp]
\centering%
\includegraphics{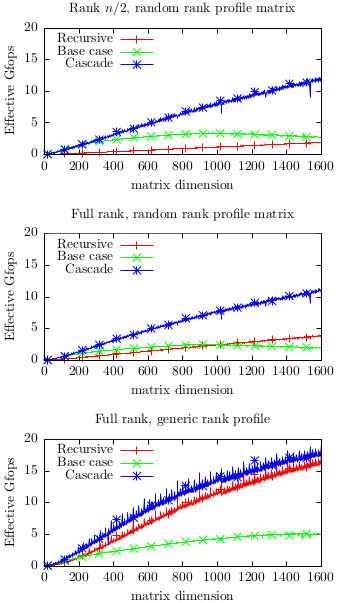}
\caption{
 Computation speed of the Base Case, the pure recurisve and the cascading
 variant for a rank profile matrix revealing PLDLTPT decomposition. Matrices with rank half the dimension and random RPM  (top), full rank with random RPM
 (center) or full rank with generic rank profile (bottom)}\label{fig:exp:BC}%
\end{figure}%

\begin{table}[htb]\centering
  \begin{tabular}{cllllll}
  \toprule
  \multirow{3}{*}{$n$} & \multicolumn{2}{c}{Gen. rank prof.} &
  \multicolumn{2}{c}{Random RPM} & \multicolumn{2}{c}{Random RPM}\\
  &\multicolumn{2}{c}{ $r=n$}&\multicolumn{2}{c}{ $r=n$}&\multicolumn{2}{c}{ $r=n/2$}\\
  & PLUQ & LDLT & PLUQ& LDLT & PLUQ & LDLT\\
  \midrule
  $100$& 5.81e-4 & 4.95e-4   &6.71e-4 & 5.95e-4 & 3.79e-4 & 3.69e-4\\
  $200$& 2.29e-3 & 1.25e-3   &3.05e-3 &1.82e-3 &1.81e-3 & 1.23e-3\\
  $500$& 1.99e-2  & 6.57e-3   &3.07e-2 & 1.05e-2&2.04e-2 & 7.54e-3\\
  $1000$& 0.104  & 2.58e-2    &1.15e-1 & 4.25e-2 &6.98e-2 & 3.14e-2 \\
  $2000$& 0.507  & 0.134     &0.551 & 0.199&0.308 & 0.148 \\
  $5000$& 4.651  & 1.720     &4.502 & 2.003 &2.813 & 1.419 \\
  $10000$&26.59 & 11.94 & 26.08 &15.88 &12.04 & 8.265\\
  \bottomrule
    
  \end{tabular}
  \caption{Comparing computation time (s) of the symmetric (LDLT) with
    unsymmetric (PLUQ) triangular decompositions. Matrices with rank $r$, and rank profile matrix uniformly random. }
\label{tab:timings}
   
\end{table}

Table~\ref{tab:timings} compares the computation time of the symmetric
decomposition algorithm with that of the unsymmetric case (running the PLUQ
algorithm of~\cite{jgd:2017:bruhat}). These experiments confirm a speed-up factor
of about 2 between these routines, which is the expected gain in the constant in
the time complexity. Note that on large instances, the PLUQ elimination performs
better with random RPM instances than generic rank profiles, contrarily to the
LDLT routine. This is due to the lesser amount of arithmetic operations when the
RPM is random (some intermediate submatrices being rank deficient). On the other
hand, these matrices generate more off-diagonal pivots, which cause more
pivoting in LDLT than in PLUQ, explaining the slow down for the symmetric case.




\bibliographystyle{plainurl}
\bibliography{ldlt}

\end{document}